\documentclass[12pt,twoside]{amsart}
\usepackage{amssymb}
\usepackage{amscd}
\usepackage{xypic}

\title{On the ACC for lengths of extremal rays}
\author{Osamu Fujino}
\author{Yasuhiro Ishitsuka} 
\subjclass[2010]{Primary 14M25; Secondary 14E30.}
\date{2012/6/4, version 1.21}
\keywords{ascending chain condition, lengths of 
extremal rays, Fano varieties, toric varieties, 
minimal model program}
\address{Department of Mathematics, Faculty of Science, 
Kyoto University, Kyoto 606-8502, Japan}
\email{fujino@math.kyoto-u.ac.jp}
\address{Department of Mathematics, Faculty of Science, 
Kyoto University, Kyoto 606-8502, Japan}
\email{yasu-ishi@math.kyoto-u.ac.jp}
\newcommand{\lcm}[0]{{\operatorname{lcm}}}
\newcommand{\Star}[0]{{\operatorname{Star}}}
\newcommand{\mult}[0]{{\operatorname{mult}}}
\newtheorem{thm}{Theorem}[section]
\newtheorem{lem}[thm]{Lemma}

\newtheorem{conj}[thm]{Conjecture}
\newtheorem{prop}[thm]{Proposition}
\newtheorem*{claim}{Claim}

\theoremstyle{definition}
\newtheorem{ex}[thm]{Example}
\newtheorem{defn}[thm]{Definition}

\newtheorem*{ack}{Acknowledgments}       
\newtheorem{say}[thm]{}
\begin{document}
\bibliographystyle{amsalpha+}

\begin{abstract} 
We discuss the ascending chain condition for 
lengths of extremal rays. 
We prove that the lengths of extremal rays 
of $n$-dimensional 
$\mathbb Q$-factorial toric Fano varieties with Picard number one 
satisfy the ascending chain condition. 
\end{abstract}

\maketitle

\section{Introduction}

We discuss the ascending chain condition (ACC, for short) 
for (minimal) lengths of extremal rays. 

First, let us recall the definition of {\em{$\mathbb Q$-factorial 
log canonical Fano varieties with Picard number one}}. 

\begin{defn}[$\mathbb Q$-factorial log canonical Fano varieties with 
Picard number one]
Let $X$ be a normal projective 
variety with only log canonical singularities. 
Assume that $X$ is $\mathbb Q$-factorial, $-K_X$ is ample, 
and $\rho (X)=1$. 
In this case, we call $X$ a {\em{$\mathbb Q$-factorial log canonical 
Fano variety with Picard number one}}. 
\end{defn}

\begin{defn}[(Minimal) lengths of extremal rays]
Let $(X, \Delta)$ be a log canonical pair and let $f:X\to Y$ be a projective 
surjective morphism. 
Let $R$ be a $(K_X+\Delta)$-negative extremal ray 
of $\overline {NE}(X/Y)$. 
Then 
$$
\min_{[C]\in R}\left(-(K_X+\Delta)\cdot C\right)
$$
is called the {\em{$($minimal$)$ length of the $(K_X+\Delta)$-negative 
extremal ray $R$}}. 
\end{defn}

From now on, we want to discuss the following conjecture. 
It seems to be the first time that the ascending chain condition 
for lengths of extremal rays is discussed in the literature. 

\begin{conj}[ACC for lengths of extremal rays of 
$\mathbb Q$-factorial log canonical Fano varieties with Picard number one]
\label{conj12}
We set 
$$
\mathcal L_n:=\left\{ l(X) \ ; \ 
\begin{matrix}
{\text{$X$ is 
an $n$-dimensional $\mathbb Q$-factorial log canonical}}
\\ {\text{Fano variety with Picard number one}}.
\end{matrix} 
\right\}
$$ 
such that 
$$
l(X):=\min _{C}(-K_X\cdot C)
$$
where $C$ is an integral curve on $X$. 
For every $n$, the set $\mathcal L_n$ satisfies the ascending chain condition. 
This means that if $X_k$ is an $n$-dimensional 
$\mathbb Q$-factorial log canonical Fano variety with Picard number one 
for every $k$ such that 
$$
l(X_1)\leq l(X_2)\leq \cdots \leq l(X_k)\leq \cdots
$$ 
then there is a positive integer $l$ such that 
$l(X_m)=l(X_l)$ for every $m\geq l$. 
\end{conj}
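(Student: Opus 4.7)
\emph{Strategy.} The plan is to combine the cone theorem for log canonical pairs with ACC-type results from the singular minimal model program (ACC for log canonical thresholds after Hacon--McKernan--Xu, and boundedness of Fano varieties with bounded singularities after Birkar) to show that any non-decreasing sequence $(l(X_k))_k$ in $\mathcal{L}_n$ must eventually stabilize.

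First I would establish that $\mathcal{L}_n$ is bounded above. By the cone theorem of Ambro and Fujino for log canonical pairs, which extends Kawamata's length bound to the lc setting, every $(K_X+\Delta)$-negative extremal ray on an $n$-dimensional lc pair admits a rational curve realizing the minimum with $-(K_X+\Delta)\cdot C \leq 2n$. Since $\rho(X)=1$, the Mori cone has a unique extremal ray, so $l(X)\leq 2n$ for every $X\in\mathcal{L}_n$, giving $\mathcal{L}_n \subset (0,2n]\cap\mathbb{Q}_{>0}$. Boundedness alone is not enough, however, because accumulation from below is a priori allowed.

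Next I would decompose $l(X)$ into two mechanisms. Since $\rho(X)=1$, the class of $-K_X$ in $N^1(X)_{\mathbb{R}}$ is a positive multiple of any ample generator, and $l(X)$ factors as the product of a generalized Fano index $r(X)$ (controlled by the global Cartier index of $-K_X$) and the minimal degree $d(X)$ of an integral curve against that generator; one has $d(X)=1$ in the smooth case. The proof plan is: (i) pass to a subsequence on which $r(X_k)$ and $d(X_k)$ both converge; (ii) apply an ACC statement for Fano indices, or for anticanonical volumes $(-K_{X_k})^n$, to force $r(X_k)$ to stabilize; (iii) use a local analysis near a minimizing curve to force $d(X_k)$ to stabilize.

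The hard part will be step (iii), because the singular locus of $X_k$ can vary wildly in an unbounded family --- already visible from the toric sequence $\mathbb{P}(1,\dots,1,a)$ as $a\to\infty$ --- and a priori the minimal curve degree $d(X_k)$ could accumulate at $L:=\lim l(X_k)$ strictly from below. Overcoming this obstacle presumably requires either new boundedness results for lc Fano varieties with $\rho=1$ (which are unbounded even toroidally) or a direct local classification of the lc singular germs through which a minimizing curve may pass. The toric case established in this paper bypasses the difficulty entirely via the combinatorics of simplicial fans, using explicit formulas in terms of lattice invariants of the primitive ray generators; lifting that combinatorial bypass to the general lc setting is, in my view, the principal open problem underlying the conjecture.
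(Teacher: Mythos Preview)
The statement you are addressing is \emph{Conjecture}~\ref{conj12}; the paper does not prove it. What the paper actually proves is the toric special case (Theorem~\ref{main-thm}), and the general log canonical statement is explicitly left open. So there is no ``paper's own proof'' to compare against, and your proposal cannot be validated as a proof of the conjecture.

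Your write-up is honest about this: it is a research strategy, not a proof, and you say as much in the final paragraph. But since you present it under the heading ``proof proposal'', let me name the concrete gaps. First, Birkar's boundedness (BAB) applies to $\varepsilon$-lc Fano varieties for fixed $\varepsilon>0$; the class of $\mathbb{Q}$-factorial \emph{lc} Fano varieties with $\rho=1$ is unbounded, as your own example $\mathbb{P}(1,\dots,1,a)$ shows, so no boundedness input is available here. Second, the factorization $l(X)=r(X)\cdot d(X)$ is not well-defined: when $\rho(X)=1$ there is no canonical integral generator of $N^1(X)_{\mathbb{R}}$, so ``Fano index'' and ``minimal curve degree against the generator'' are not separately meaningful invariants, and no ACC statement for either is known. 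Third, step~(iii) is, by your own admission, the heart of the matter and is simply not carried out. In short, steps~(ii) and~(iii) both rely on results that do not exist, and the decomposition in between them is not rigorously formulated.

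What the paper does instead, for the toric subcase, is entirely different in character: it writes $l(X)$ explicitly in terms of the lattice data $(a_1,\dots,a_{n+1})$ and the multiplicities $\mult(\sigma_j)$, $\mult(\mu_{k,l})$, splits it as a sum of $n+1$ terms, and shows each summand lies in a set satisfying ACC by bounding the numerators and denominators in terms of $\llcorner\varepsilon^{-1}\lrcorner!$ once a lower bound $\varepsilon$ is fixed. This is a finiteness argument for rational numbers with controlled height, not a geometric boundedness argument, and it has no evident analogue outside the toric world.
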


We note that $l(X)\leq 2\dim X$ when 
$X$ is a $\mathbb Q$-factorial log canonical 
Fano variety with $\rho(X)=1$ (see, for example, \cite[Theorem 
18.2]{fujino-funda}). 

Although, for inductive treatments, it may be better to 
consider the ascending chain condition for lengths of extremal rays 
of {\em{log Fano}} pairs $(X, D)$ such that 
the coefficients of $D$ are 
contained in a set satisfying the descending chain condition, 
we only discuss the case when $D=0$ for simplicity.  
In this paper, we are mainly interested in 
$\mathbb Q$-factorial toric Fano varieties with Picard number one. 
Note that a $\mathbb Q$-factorial toric variety always has only log canonical 
singularities. 
So, we define 
$$
\mathcal L^{\mathrm{toric}}_n:=\left\{ l(X) \ ;\ 
\begin{matrix} {\text{$X$ is 
an $n$-dimensional $\mathbb Q$-factorial toric}} 
\\
{\text{Fano variety with 
Picard number one}}
\end{matrix}
\right\}. 
$$

Let $X$ be an $n$-dimensional $\mathbb Q$-factorial 
toric Fano variety with $\rho (X)=1$. 
Then we have $l(X)\leq n+1$. 
Furthermore, $l(X)\leq n$ if 
$X\not \simeq \mathbb P^n$ 
(cf.~\cite[Proposition 2.9]{fujino}). 
We can easily see that 
$X\simeq \mathbb P(1, 1, 2, \cdots, 2)$ if and 
only if $l(X)=n$ 
(cf.~\cite[Section 2]{fujino}, \cite[Proposition 2.1]{fujino-osaka}, 
and \cite{fujino2}). 

The following result is the main theorem of this paper, 
which supports Conjecture \ref{conj12}. 

\begin{thm}[Main theorem]\label{main-thm}
For every $n$, 
$\mathcal L_n^{\mathrm{toric}}$ satisfies the ascending chain condition. 
\end{thm}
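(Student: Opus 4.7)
The plan is to use the explicit combinatorial description of $X$ as a fake weighted projective space to obtain a closed-form expression for $l(X)$ and then run a rigidity argument for increasing sequences.

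Let $v_0,\ldots,v_n\in N\simeq\mathbb Z^n$ denote the primitive ray generators of the fan of $X$, and let $\sum_{i=0}^n a_i v_i=0$ be the unique linear relation among them, normalized so that $a_i\in\mathbb Z_{>0}$ and $\gcd(a_0,\ldots,a_n)=1$. Set $L:=\mathbb Z\langle v_0,\ldots,v_n\rangle$ and $\mu:=[N:L]$, and for each pair $i<j$ let $\tau_{ij}:=\langle v_l:l\ne i,j\rangle$ be the corresponding wall, with invariant curve $C_{ij}$ and maximal cone $\sigma_k:=\langle v_l:l\ne k\rangle$. First I would verify $\mult(\sigma_k)=\mu\, a_k$ for all $k$ and then combine Reid's formula with the $\rho(X)=1$ proportionality $D_j=(a_j/a_i)D_i$ in $\mathrm{Pic}(X)_{\mathbb Q}$ to obtain
\[
-K_X\cdot C_{ij}=\frac{(a_0+\cdots+a_n)\,\mult(\tau_{ij})}{\mu\, a_i a_j},
\]
so that $l(X)=\min_{i<j}(-K_X\cdot C_{ij})$. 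Since $\tau_{ij}$ is a codimension-one face of both $\sigma_i$ and $\sigma_j$, its multiplicity divides $\gcd(\mult(\sigma_i),\mult(\sigma_j))=\mu\gcd(a_i,a_j)$, yielding the basic bound
\[
l(X)\le\min_{i<j}\frac{a_0+\cdots+a_n}{\lcm(a_i,a_j)}.
\]

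For the ACC assertion I would argue by contradiction: suppose $l(X_m)$ is strictly increasing with $l(X_m)\ge\varepsilon>0$ for all $m$. Sorting the weights $a_0^m\le\cdots\le a_n^m$, the displayed inequality gives $\lcm(a_i^m,a_j^m)\le(n+1)\,a_n^m/\varepsilon$ for every pair. A careful analysis, varying the pair $(i,j)$ and exploiting the well-formedness condition $\gcd(a_l:l\ne i)=1$ together with the divisibilities $\mult(\tau_{ij}^m)\mid\mu^m\gcd(a_i^m,a_j^m)$, should show that, after passing to a subsequence, all the weights $a_0^m,\ldots,a_{n-1}^m$, the lattice index $\mu^m$, and the wall multiplicities $\mult(\tau_{ij}^m)$ stabilize to constant values. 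Once this data is fixed, the formula above displays $l(X_m)$ as a strictly decreasing function of the remaining parameter $a_n^m$ (for the pair realizing the minimum), so the strict increase of $l(X_m)$ forces $a_n^m$ to be strictly decreasing; since these are positive integers, the sequence must be eventually constant, contradicting the strict increase of $l(X_m)$.

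The main obstacle lies in the stabilization argument of the previous paragraph: the inequalities from the basic bound only directly control the ratios $a_i^m/\gcd(a_i^m,a_n^m)$ rather than the weights $a_i^m$ themselves, so one must exploit the constraints coming from several pairs $(i,j)$ simultaneously, together with the lattice-theoretic relations tying together $\mult(\tau_{ij}^m)$, $\mu^m$ and the $a_k^m$, to reduce the problem to one with only finitely many combinatorial ``types.'' I expect this combinatorial/lattice analysis to be the longest and most delicate part of the proof.
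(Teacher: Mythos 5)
Your setup is sound and agrees with the paper: the formula
\[
-K_X\cdot C_{ij}=\frac{(a_0+\cdots+a_n)\,\mult(\tau_{ij})}{\mu\,a_ia_j},\qquad \mult(\sigma_k)=\mu a_k,
\]
is exactly the one the paper works with, and the divisibility $\mult(\tau_{ij})\mid \mu\gcd(a_i,a_j)$ is correct. But the heart of your argument --- the stabilization step --- is both unproven and, as stated, false. You claim that from $l(X_m)\geq\varepsilon$ and the divisibility constraints one can pass to a subsequence along which all of $a_0^m,\ldots,a_{n-1}^m$, $\mu^m$ and the wall multiplicities become constant, leaving $a_n^m$ as the only free parameter. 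Consider $X_k=\mathbb P(1,1,k,k)$, which is well formed, with $l(X_k)=\frac{2k+2}{k}=2+\frac2k>2$ for every $k$. Here two of the sorted weights, including $a_{n-1}^{(k)}=k$, are unbounded, so no subsequence stabilizes all but the top weight even though $l$ stays above $\varepsilon=2$. (This sequence has $l$ decreasing, so it is no threat to ACC, but it kills the intermediate claim on which your contradiction rests; you never invoke the monotonicity of $l(X_m)$ before the final step, so nothing in your argument rules this behaviour out.) The difficulty you correctly identify at the end --- that the inequalities control only ratios such as $a_i/\gcd(a_i,a_j)$, not the weights themselves --- is genuine, and the way out is not to force the weights to stabilize.

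For comparison, the paper sidesteps stabilization entirely. After relabelling so that the pair $(1,2)$ realizes the minimum of $\frac{\mult(\mu_{k,l})}{a_k\mult(\sigma_l)}$, it writes $l(X)$ as the sum of the $n+1$ terms $\frac{\mult(\mu_{1,2})}{a_1\mult(\sigma_2)}a_i$, notes that the terms with $i=1,2$ lie in $\{1/m\,:\,m\in\mathbb Z_{>0}\}$, and proves that for $i\geq 3$ the set $\mathcal M_i$ of possible values of the $i$-th term meets $(\varepsilon,\infty)$ in a finite set. The key point is that this term equals $\dfrac{a_i/\gcd(l,a_i)}{m_{1,2}\cdot l/\gcd(l,a_i)}$ with $l=\lcm(a_1,a_2)$, and the three integers $a_i/\gcd(l,a_i)$, $l/\gcd(l,a_i)$, $m_{1,2}$ are bounded in terms of $\varepsilon$ (the bound on $m_{1,2}$ requiring a separate lattice-theoretic argument), even though $a_i$ and $l$ themselves are unbounded. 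One then concludes with the elementary facts that a sum of ACC sets is ACC and that a set bounded below with only finitely many elements above each threshold is ACC. If you want to salvage your approach, you should replace ``the weights stabilize'' by ``the quantities the formula actually depends on take finitely many values above $\varepsilon$,'' which is essentially the paper's route.
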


In 2003, Professor Vyacheslav Shokurov explained his ideas on 
minimal log discrepancies, log canonical thresholds, and 
lengths of extremal rays to the first author at his office. 
He pointed out some analogies among them and 
asked the ascending chain condition for lengths of extremal rays. 
It is a starting point of this paper. For his 
ideas on minimal log discrepancies and log canonical 
thresholds, see, for example, \cite{bs}. 
We note that Hacon--M\textsuperscript{c}Kernan--Xu announced that 
they have established the ACC for log canonical 
thresholds (cf.~\cite{hmx}). 
We also note that the ACC for minimal log discrepancies is closely related to the termination 
of log flips (cf.~\cite{shokurov}).  
We recommend the reader to see \cite{kollar} and \cite{totaro} 
for various aspects of log canonical thresholds. 

We close this section with examples. 
Example \ref{abab} shows that the set $\mathcal L_n^{\mathrm {toric}}$ does not 
satisfy the descending chain condition. 
Example \ref{abcd} implies that the ascending chain condition does not necessarily 
hold for (minimal) lengths of extremal rays of birational type. 

\begin{ex}\label{abab}
We consider $X_k=\mathbb P(1, k-1, k)$ with $k\geq 2$. 
Then 
$$
l(X_k)=\frac{2}{k-1}. 
$$
Therefore, $l(X_k)\to 0$ when $k\to \infty$. 
\end{ex}

\begin{ex}\label{abcd}
We fix $N=\mathbb Z^2$ and let $\{e_1, e_2\}$ be the
standard basis of $N$. We consider the cone $\sigma =\langle e_1, 
e_2\rangle$ in $N'=N+\mathbb Z e_{3}$, where $e_{3}=
\frac{1}{b}(1, a)$.
Here, $a$ and $b$ are positive integers such that $\gcd(a,b)=1$.
Let $Y=X(\sigma)$ be the associated affine toric surface which has only one
singular point $P$.
We take a weighted blow-up of $Y$ at $P$ with
the weight $\frac{1}{b}(1, a)$.
This means that we divide $\sigma$ by $e_{3}$ and obtain 
a fan $\Delta$ of $N'_{\mathbb R}$.
We define $X=X(\Delta)$. It is obvious that $X$
is $\mathbb Q$-factorial and $\rho (X/Y)=1$. We can easily
obtain $$K_X=f^*K_Y+\left(\frac{1+a}{b}-1\right)E, $$ 
where $E=V(e_{3})\simeq
\mathbb P^{1}$ is the exceptional curve of $f$, 
and $$-K_X\cdot E=1-\frac{b-1}{a}.$$ We note that 
$$
-K_X\cdot E=\min _{C} (-K_X\cdot C)
$$ 
where $C$ is a curve on $X$ such that $f(C)$ is a point. 
We also note that $\overline {NE}(X/Y)=NE(X/Y)$ is spanned by 
$E$. 
In the above construction, we set $a=k^2$ and $b=mk+1$ for any positive integers 
$k$, $m$. 
Then it is obvious that $\gcd(a,b)=1$. 
Thus we obtain 
$$-K_X\cdot E=1-\frac{m}{k}. $$
Therefore, the minimal lengths of $K_X$-negative extremal rays do not 
satisfy the ascending chain condition in this local setting. 
More precisely, 
the minimal lengths of $K_X$-negative extremal rays can take any values in 
$\mathbb Q\cap (0, 1)$ in this example. 
\end{ex}

We note that the minimal length of the $K_X$-negative 
extremal ray associated to a toric {\em{birational}} contraction morphism 
$f:X\to Y$ is bounded by $\dim X-1$ (cf.~\cite{fujino2}). 

For estimates of lengths of extremal rays of toric varieties and related topics, 
see \cite{fujino}, \cite{fujino-osaka}, and \cite{fujino2}. 

\begin{ack}
The first author was partially supported by The Inamori Foundation and by the 
Grant-in-Aid for Young Scientists (A) $\sharp$20684001 from JSPS. 
He would like to thank Professor Vyacheslav Shokurov for explaining his ideas 
at Baltimore in 2003. 
The both authors would like to thank Professor Tetsushi Ito 
for warm encouragement. 
They also would like to thank the referee for useful comments and 
pointing out some ambiguities. 
\end{ack}

\section{Preliminaries}\label{sec2}

In this section, we prepare various definitions and notation. 
We recommend the reader to see \cite[Section 2]{fujino} 
for basic calculations.
 
\begin{say}
Let $N\simeq \mathbb Z^n$ be a lattice of rank $n$. 
A toric variety $X(\Delta)$ is associated to a {\em{fan}} $\Delta$, 
a collection of convex cones $\sigma\subset N_\mathbb R =
N\otimes _{\mathbb Z}\mathbb R$ satisfying the following conditions: 
\begin{enumerate}
\item[(i)] 
Each convex cone $\sigma$ is a rational polyhedral in the sense 
there are finitely many 
$v_1, \cdots, v_s\in N\subset N_{\mathbb R}$ such 
that 
$$
\sigma=\{r_1v_1+\cdots +r_sv_s; \ r_i\geq 0\}=
:\langle v_1, \cdots, v_s\rangle, 
$$ 
and 
it is strongly convex in the sense 
$$
\sigma \cap -\sigma=\{0\}. 
$$
\item[(ii)] Each face $\tau$ of a convex cone $\sigma\in \Delta$ 
is again an element in $\Delta$. 
\item[(iii)] The intersection of two cones in $\Delta$ is a face of 
each. 
\end{enumerate}

\begin{defn}\label{saisyo}
The {\em{dimension}} $\dim \sigma$ of $\sigma$ is 
the dimension of the linear space 
$\mathbb R\cdot \sigma=\sigma +(-\sigma)$ spanned 
by $\sigma$. 

We denote by $N_\sigma$ the sublattice of $N$ generated 
(as a subgroup) by $\sigma\cap N$, i.e.,  
$$
N_{\sigma}:=\sigma\cap N+(-\sigma\cap N). 
$$

If $\sigma$ is a $k$-dimensional simplicial 
cone, and $v_1,\cdots, v_k$ are the 
first lattice points along the edges of $\sigma$, 
the {\em{multiplicity}} of $\sigma$ is defined 
to be the {\em{index}} of the lattice 
generated by the $\{v_i\}$ in the lattice $N_{\sigma}$; 
$$
\mult (\sigma):=|N_{\sigma}:\mathbb Zv_1+\cdots +
\mathbb Zv_k|. 
$$ 
We note that $X(\sigma)$, which is an affine toric variety 
associated to $\sigma$, is non-singular if and only 
if $\mult (\sigma)=1$. 
\end{defn}
\end{say}

Let us recall a well-known fact. See, for example, 
\cite[Lemma 14-1-1]{ma}. 

\begin{lem}\label{kantan}
A toric variety $X(\Delta)$ is $\mathbb Q$-factorial 
if and only if each cone $\sigma\in \Delta$ is simplicial. 
\end{lem}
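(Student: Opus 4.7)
The plan is to translate $\mathbb Q$-factoriality into an entirely combinatorial statement on the fan, using the standard description of torus-invariant Weil divisors. Since every Weil divisor on a toric variety is linearly equivalent to a $T$-invariant one of the form $D=\sum _\rho a_\rho D_\rho$, where $\rho$ runs over the rays of $\Delta$ and $D_\rho$ is the corresponding prime $T$-invariant divisor, it suffices to check that every such $D$ is $\mathbb Q$-Cartier. Recall that $D$ is Cartier exactly when, for each maximal cone $\sigma\in\Delta$ with primitive ray generators $v_1,\dots,v_s$, there exists $m_\sigma\in M=\mathrm{Hom}(N,\mathbb Z)$ with $\langle m_\sigma,v_i\rangle=-a_i$ for all $i$; hence $D$ is $\mathbb Q$-Cartier iff for each such $\sigma$ a rational solution exists, i.e.\ iff $(a_1,\dots,a_s)$ lies in the image of the evaluation map
$$
\Phi_\sigma\colon M_{\mathbb Q}\longrightarrow \mathbb Q^s,\qquad m\longmapsto \bigl(\langle m,v_i\rangle\bigr)_{i=1}^s.
$$

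For the ``if'' direction, assume every cone of $\Delta$ is simplicial, and fix a maximal cone $\sigma$ of dimension $d$. Then $s=d$ and $v_1,\dots,v_d$ are linearly independent in $N_{\mathbb Q}$, so $\Phi_\sigma$ is surjective (onto $\mathbb Q^d=\mathbb Q^s$). Therefore we can solve $\langle m_\sigma,v_i\rangle=-a_i$ in $M_{\mathbb Q}$ for any prescribed $(a_1,\dots,a_s)$, and any $T$-invariant Weil divisor $D$ is $\mathbb Q$-Cartier. Hence $X(\Delta)$ is $\mathbb Q$-factorial.

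For the converse, suppose some cone of $\Delta$ fails to be simplicial. Since any face of a simplicial cone is simplicial, we may pass to a maximal cone $\sigma\in\Delta$ that is not simplicial; its primitive ray generators $v_1,\dots,v_s$ then satisfy $s>\dim\sigma$. The image of $\Phi_\sigma$ has $\mathbb Q$-dimension at most $\dim\sigma<s$, so it is a proper subspace of $\mathbb Q^s$, and we can choose $(a_1,\dots,a_s)\in\mathbb Q^s$ outside it. Extending by zero on the remaining rays of $\Delta$ produces a $T$-invariant Weil divisor $D=\sum_{i=1}^s a_i D_{v_i}$ for which no rational solution $m_\sigma$ exists; thus $D$ is not $\mathbb Q$-Cartier on the affine chart $X(\sigma)$, and $X(\Delta)$ is not $\mathbb Q$-factorial.

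The only real content is the cone-by-cone Cartier criterion for $T$-invariant divisors and the reduction to $T$-invariant representatives; both are entirely standard and are the substance of the reference \cite[Lemma 14-1-1]{ma}. Once those are in hand, the proof is just the linear-algebra dichotomy ``$\Phi_\sigma$ is surjective iff $\sigma$ is simplicial,'' so no genuine obstacle arises.
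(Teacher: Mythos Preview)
Your argument is correct and is the standard proof. Note, however, that the paper itself does not supply a proof of this lemma: it merely records the statement as a well-known fact and defers to \cite[Lemma 14-1-1]{ma}. So there is no ``paper's own proof'' to compare against beyond that citation, and what you have written is precisely the linear-algebra argument one finds in the standard references (reduce to $T$-invariant divisors, then observe that the cone-by-cone support-function equations are solvable over $\mathbb Q$ for all right-hand sides iff the ray generators are linearly independent).

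One cosmetic remark on the non-simplicial direction: you choose $(a_1,\dots,a_s)\in\mathbb Q^s$ outside the image of $\Phi_\sigma$ and then form the divisor $D=\sum a_iD_{v_i}$, but a Weil divisor has integer coefficients. Since the image of $\Phi_\sigma$ is a $\mathbb Q$-linear subspace, clearing denominators yields an integer tuple still lying outside it, so the conclusion is unaffected; a one-line remark to that effect would make the argument airtight.
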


\begin{say}
The {\em{star}} of a cone $\tau$ can be defined abstractly 
as the set of cones $\sigma$ in $\Delta$ that 
contain $\tau$ as a face. Such cones $\sigma$ are 
determined by their images in 
$N(\tau):=N/{N_{\tau}}$, that is, by 
$$
\overline \sigma=(\sigma+(N_{\tau})_{\mathbb R})/ 
(N_{\tau})_{\mathbb R}\subset N(\tau)_{\mathbb R}. 
$$ 
These cones $\{\overline \sigma ; \tau\prec \sigma\}$ 
form a fan of $N(\tau)$, and we denote this fan by 
$\Star(\tau)$. 
We set $V(\tau)=X(\Star (\tau))$. 
It is well known that $V(\tau)$ is an $(n-k)$-dimensional 
closed torus invariant subvariety of $X(\Delta)$, where $\dim \tau=k$. 
If $\dim V(\tau)=1$ (resp.~$n-1$), then we call $V(\tau)$ 
a {\em{torus invariant curve}} (resp.~{\em{torus invariant 
divisor}}). 
For the details about the correspondence between $\tau$ and 
$V(\tau)$, see \cite[3.1 Orbits]{fulton}. 
\end{say}

\begin{say}[Intersection Theory]\label{int}
Assume that $\Delta$ is simplicial. 
If $\sigma, \tau\in \Delta$ span $\gamma$ with 
$\dim \gamma=\dim \sigma +\dim \tau$, 
then 
$$
V(\sigma)\cdot V(\tau)=\frac{\mult (\sigma)\cdot \mult(\tau)}
{\mult (\gamma)} V(\gamma)
$$ 
in the {\em{Chow group}} $A^{*}(X)_{\mathbb Q}$. 
For the details, see \cite[5.1 Chow groups]{fulton}. 
If $\sigma$ and $\tau$ are contained in no cone of 
$\Delta$, then $V(\sigma)\cdot V(\tau)=0$. 
\end{say}

\begin{say}[$\mathbb Q$-factorial toric Fano varieties with Picard number one]
\label{tuitui}
Now we fix $N\simeq \mathbb Z ^n$. Let $\{v_1,\cdots,v_{n+1}\}$ 
be a set of primitive vectors such that $N_{\mathbb R}=\sum _i 
\mathbb R_{\geq 0}v_i$. 
We define $n$-dimensional cones 
$$
\sigma_i:=\langle v_1,\cdots,v_{i-1},v_{i+1},\cdots,v_{n+1}\rangle 
$$ 
for $1\leq i\leq n+1$. 
Let $\Delta$ be the complete fan generated by $n$-dimensional 
cones $\sigma_i$ and their faces for all $i$. Then 
we obtain a complete toric variety $X=X(\Delta)$ with 
Picard number $\rho (X)=1$. 
It is well known that $X$ has only log canonical singularities 
(see, for example, \cite[Proposition 14-3-2]{ma}) 
and that $-K_X$ is ample. 
We call it a {\em{$\mathbb Q$-factorial 
toric Fano variety with Picard number one}} (see 
also Lemma \ref{2727} below). 
We define $(n-1)$-dimensional cones $\mu_{i,j}=\sigma _i\cap \sigma _j$ 
for $i\ne j$. 
We can write $\sum _i a_i v_i=0$, where $a_i\in \mathbb Z_{>0}$ for every $i$ and 
$\gcd(a_1,\cdots,a_{n+1})=1$. 
From now on, we simply write $V(v_i)$ to denote 
$V(\langle v_i\rangle)$ for every $i$. Note that 
$\mult (\langle v_i\rangle)=1$ for every $i$. 
Then we obtain 
$$
0< V({v_{l}})\cdot V(\mu_{k,l})=\frac{\mult {(\mu_{k,l})}}
{\mult {(\sigma_{k})}},  
$$
$$
V({v_{i}})\cdot V(\mu_{k,l})=\frac{a_i}{a_{l}}\cdot
\frac{\mult {(\mu_{k,l})}}
{\mult {(\sigma_{k})}}, 
$$
and 
\begin{eqnarray*}
-K_{X} \cdot V(\mu_{k,l})&=&
\sum _{i=1}^{n+1} V({v_i})\cdot V(\mu_{k,l})\\
& =&
\frac {1}{a_{l}}
{(\sum_{i=1}^{n+1} a_i)}
\frac{\mult {(\mu_{k,l})}}
{\mult {(\sigma_{k})}},  
\end{eqnarray*} 
where $K_X=-\sum _{i=1}^{n+1}V(v_i)$ is 
a {\em{canonical divisor}} of $X$. 
For the procedure to compute intersection numbers, 
see \ref{int} or \cite[p.100]{fulton}. 
\end{say}

We note the following well-known fact. 

\begin{lem}\label{2727}
Let $X$ be an $n$-dimensional $\mathbb Q$-factorial complete normal variety with 
Picard number one. Assume that 
$X$ is toric. 
Then $X$ is an $n$-dimensional $\mathbb Q$-factorial toric Fano variety with Picard number one. 
\end{lem}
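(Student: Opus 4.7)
The plan is to show that $-K_X$ is ample; the log canonical condition comes for free for $\mathbb Q$-factorial toric varieties, as the paper already remarks. First, since $X$ is $\mathbb Q$-factorial and toric, Lemma~\ref{kantan} tells us that the defining fan $\Delta$ is simplicial. For a complete simplicial fan in $N_{\mathbb R}\cong\mathbb R^n$ one has the standard identity $\rho(X)=(\#\text{rays of }\Delta)-n$, so $\rho(X)=1$ forces $\Delta$ to have exactly $n+1$ rays; let $v_1,\dots,v_{n+1}$ denote their primitive generators.

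Next, I want to identify $\Delta$ with the fan of \ref{tuitui}. Simpliciality says each maximal cone uses $n$ of the $n+1$ rays, so the maximal cones are among the $\sigma_i=\langle v_1,\dots,\widehat{v_i},\dots,v_{n+1}\rangle$; completeness of $\Delta$ then forces every $\sigma_i$ to appear and supplies a positive linear relation $\sum a_iv_i=0$ with $a_i\in\mathbb Z_{>0}$ and $\gcd(a_1,\dots,a_{n+1})=1$. This is precisely the setup of \ref{tuitui}.

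With that identification, the intersection number computation recorded in \ref{tuitui} gives, for every $(n-1)$-dimensional cone $\mu_{k,l}=\sigma_k\cap\sigma_l$,
$$
-K_X\cdot V(\mu_{k,l})=\frac{1}{a_l}\Bigl(\sum_{i=1}^{n+1}a_i\Bigr)\frac{\mult(\mu_{k,l})}{\mult(\sigma_k)}>0,
$$
since each $a_i$ and each multiplicity is a positive integer. Because $\rho(X)=1$, every curve class is a positive multiple of the class of $V(\mu_{k,l})$; equivalently, by the toric Kleiman criterion, a $\mathbb Q$-Cartier divisor on a complete $\mathbb Q$-factorial toric variety is ample iff it intersects every torus-invariant curve positively. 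Either way, $-K_X$ is ample, $X$ is projective, and $X$ is an $n$-dimensional $\mathbb Q$-factorial toric Fano variety with Picard number one.

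The one step that deserves care is the fan identification: justifying that a complete simplicial fan in $\mathbb R^n$ with $n+1$ rays must have all $\binom{n+1}{n}=n+1$ of the $\sigma_i$ as its maximal cones, together with a strictly positive linear relation among the $v_i$. Once this combinatorial step is in hand, the intersection number formula and $\rho(X)=1$ produce the ampleness of $-K_X$ with no further work.
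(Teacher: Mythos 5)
Your argument is correct in outline and is the standard one; note that the paper itself offers no proof of this lemma (it is stated as a ``well-known fact''), so there is no authorial argument to compare against. The only real work is the step you flag yourself, and since you leave it unjustified I will close it here. Completeness gives at least one maximal cone, say $\sigma_{n+1}=\langle v_1,\dots,v_n\rangle$, so $v_1,\dots,v_n$ is an $\mathbb R$-basis of $N_{\mathbb R}$; write $v_{n+1}=\sum_{j\le n}c_jv_j$. In a complete fan every $(n-1)$-dimensional cone is a facet of exactly two maximal cones, so each facet $\tau_j=\langle v_i :\, i\le n,\ i\ne j\rangle$ of $\sigma_{n+1}$ bounds a second maximal cone; that cone is simplicial with edges among the $n+1$ given rays, contains $\tau_j$, and differs from $\sigma_{n+1}$, hence must be $\sigma_j=\langle v_i :\, i\ne j\rangle$. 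Thus all the $\sigma_j$ occur. Moreover $\sigma_j$ lies on the opposite side of the hyperplane spanned by $\tau_j$ from $v_j$, and is $n$-dimensional, which forces $c_j<0$ for every $j\le n$; the relation $v_{n+1}-\sum_{j\le n}c_jv_j=0$ then has all coefficients strictly positive, and clearing denominators gives $\sum_i a_iv_i=0$ with $a_i\in\mathbb Z_{>0}$, exactly the setup of \ref{tuitui}. With that in place the remaining steps you cite are all sound: $\rho(X)=\#\{\text{rays}\}-n$ for complete simplicial fans yields the $n+1$ rays, the formula in \ref{tuitui} gives $-K_X\cdot V(\mu_{k,l})>0$, and the toric Kleiman criterion (valid on any complete toric variety) upgrades this to ampleness of $-K_X$ and in particular to projectivity of $X$, which is the one conclusion that does not come for free from completeness alone.
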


Let us recall the following easy lemma, which will play crucial roles in 
the proof of our main theorem:~Theorem \ref{main-thm}. 
The proof of Lemma \ref{kiso} is obvious by the description in \ref{tuitui}. 

\begin{lem}\label{kiso} 
We use the notations in {\em{\ref{tuitui}}}. 
We consider the sublattice 
$N'$ of $N$ spanned by 
$\{v_1, \cdots, v_{n+1}\}$. 
Then the natural inclusion 
$N'\to N$ induces a finite 
toric morphism $f:X'\to X$ from a 
weighted projective space $X'$ such 
that $f$ is {}\'etale in codimension one. 
In particular, $X(\Delta)$ is a weighted projective 
space if and only if $\{v_1, \cdots, v_{n+1}\}$ 
generates $N$. 
\end{lem}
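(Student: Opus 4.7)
The plan is to realize $X'$ as the toric variety attached to the same fan $\Delta$ regarded in the sublattice $N'$, identify it with a weighted projective space, and then analyze the natural morphism $f : X' \to X$ coming from the inclusion $N' \hookrightarrow N$.

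Since $v_1, \ldots, v_{n+1}$ span $N_{\mathbb R}$, the sublattice $N'$ has full rank in $N$, so $N/N'$ is finite. Each $v_i$ remains primitive in $N'$, because any equation $v_i = kw$ with $w\in N'$, $k>1$ would already contradict primitivity of $v_i$ in the larger lattice $N$. The relation $\sum a_i v_i = 0$ is, up to scalar, the only relation among $n+1$ vectors spanning an $n$-dimensional real space, and the condition $\gcd(a_i)=1$ forces the kernel of the surjection $\mathbb Z^{n+1}\to N'$, $e_i\mapsto v_i$, to be exactly $\mathbb Z(a_1,\ldots,a_{n+1})$. The resulting isomorphism $\mathbb Z^{n+1}/\mathbb Z(a_1,\ldots,a_{n+1})\cong N'$ carries the standard fan of $\mathbb P(a_1,\ldots,a_{n+1})$ onto $\Delta$, showing $X'\simeq \mathbb P(a_1,\ldots,a_{n+1})$.

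The inclusion $N' \hookrightarrow N$ is a lattice map respecting $\Delta$, so it induces a toric morphism $f : X' \to X$, which is finite of degree $[N:N']$. For \'etaleness in codimension one I would work at the generic point of each toric divisor $V(v_i)$. Primitivity of $v_i$ in both lattices gives splittings $N = \mathbb Z v_i \oplus L$ and $N' = \mathbb Z v_i \oplus (N' \cap L)$, hence isomorphisms of affine charts $X(\langle v_i\rangle, N) \cong \mathbb A^1 \times T_L$ and $X(\langle v_i\rangle, N') \cong \mathbb A^1 \times T_{N' \cap L}$, under which $f$ becomes the product of the identity on $\mathbb A^1$ with the torus morphism $T_{N' \cap L} \to T_L$ induced by the finite-index inclusion $N'\cap L \hookrightarrow L$. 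This last map is a finite quotient of tori by the group of characters dual to $L/(N'\cap L)$, hence \'etale, so $f$ is \'etale on an open neighborhood of every codimension-one torus-invariant point.

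For the ``in particular'' statement: if $\{v_i\}$ generates $N$ then $N' = N$, so $f$ is an isomorphism and $X \simeq \mathbb P(a_1, \ldots, a_{n+1})$. Conversely, if $X$ is a weighted projective space, then by the uniqueness of the fan of a normal toric variety up to lattice isomorphism, $\Delta\subset N_{\mathbb R}$ is isomorphic to a standard weighted projective fan, whose primitive ray generators span the ambient lattice; transferring this property back to $N$ via the isomorphism forces the $v_i$ to generate $N$. The only step that requires any real care is the local splitting used for the codimension-one \'etaleness check, but once primitivity of the $v_i$ in both lattices is in hand the rest of that argument is routine, so I expect no serious obstacle.
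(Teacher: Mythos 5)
Your argument is correct and is exactly the standard reasoning the authors have in mind: the paper itself gives no details, stating only that the lemma ``is obvious by the description in \ref{tuitui}.'' All the key points check out --- the kernel of $\mathbb Z^{n+1}\to N'$ is saturated because $N'$ is free and hence equals $\mathbb Z(a_1,\cdots,a_{n+1})$ by $\gcd(a_1,\cdots,a_{n+1})=1$, the splitting $N=\mathbb Zv_i\oplus L$ does restrict to $N'=\mathbb Zv_i\oplus(N'\cap L)$ since $v_i\in N'$, and the charts $X(\langle v_i\rangle)$ together with the torus cover $X$ outside codimension two --- so your write-up is a valid filling-in of the omitted proof.
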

For a toric description of {\em{weighted projective spaces}}, 
see \cite[Section 2]{fujino}. 

\begin{say}
In Lemma \ref{kiso}, we consider $C=V(\mu_{k, l})\simeq \mathbb P^1\subset X$ and 
the unique torus invariant curve $C'\subset X'$ such that 
$f(C')=C$. 
We set 
$$
m_{k, l}:=\deg (f|_{C'}: C'\to C) \in \mathbb Z_{>0}
$$ 
for every $(k, l)$. 
Then we can check that 
$$
m_{k, l}=\left|N(\mu_{k, l})/N'(\mu_{k, l})\right|
$$ 
by definitions, where 
$N'(\mu_{k, l})=N'/N'_{\mu_{k, l}}$ and 
$N(\mu_{k, l})=N/N_{\mu_{k, l}}$. 
Let $D$ be a Cartier divisor on $X$. 
Then we obtain 
$$
C\cdot D=\frac{1}{m_{k, l}}(C'\cdot f^*D)
$$ 
by the projection formula. 
Therefore, we have 
\begin{align*}
C\cdot V(v_k)&=V(\mu_{k, l})\cdot V(v_k)\\ 
&=\frac{\mult (\mu_{k, l})}{\mult (\sigma_l)}=\frac{\gcd (a_k, a_l)}
{m_{k, l}a_l}. 
\end{align*}
This is because 
$$
(C'\cdot f^*V(v_k))=\frac{\gcd(a_k, a_l)}{a_l}
$$ 
since $X'$ is a weighted projective space. 
\end{say}
\begin{say}[Lemma on the ACC]
We close this section with an easy lemma for the ascending 
chain condition. 
\begin{lem}\label{lem-ishi} 
We have the following elementary properties. 
\begin{enumerate}
\item If $A$ satisfies the ascending chain condition, then 
any subset $B$ of $A$ satisfies 
the ascending chain condition. 
\item If $A$ and $B$ satisfy the ascending chain condition, 
then so does 
$$
A+B=\{a+b\, |\, a\in A, b\in B\}. 
$$
\item If there exists a real number $t_0$ such that 
$$
A\subset \{x\in \mathbb R\, | \, x\geq t_0\}
$$ 
and $A\cap \{x\in \mathbb R\, |\, x>t\}$ is a finite set for any $t>t_0$, 
then $A$ satisfies the ascending chain condition. 
\end{enumerate}
\end{lem}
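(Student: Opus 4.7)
The plan is to verify each of the three clauses directly from the definition: a set $A\subset\mathbb R$ satisfies the ACC if and only if every non-decreasing sequence drawn from $A$ is eventually constant, equivalently, if and only if $A$ admits no strictly increasing infinite sequence.

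Part (1) is immediate, since any non-decreasing sequence in $B\subset A$ is already one in $A$ and hence stabilizes. Part (3) is also a short direct argument: if a non-decreasing sequence $a_1\leq a_2\leq\cdots$ in $A$ failed to stabilize, I would extract a strictly increasing subsequence $a_{k_j}$; since the $a_{k_j}$ are eventually $>t_0$, I could pick any $t$ with $t_0<t<a_{k_{j_0}}$ for a suitable $j_0$, and then $\{a_{k_j}\}_{j\geq j_0}$ would be an infinite subset of $A\cap\{x>t\}$, contradicting the finiteness hypothesis.

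The only step requiring a small combinatorial trick is part (2). I would argue by contradiction: assume $c_k=a_k+b_k$ is a strictly increasing sequence in $A+B$, and invoke the standard fact that every infinite real sequence has a monotone infinite subsequence to extract a subsequence $k_j$ along which $a_{k_j}$ is monotone. If $a_{k_j}$ is non-decreasing, then ACC for $A$ forces it to be eventually equal to some $a^{\ast}$, so on that tail $b_{k_j}=c_{k_j}-a^{\ast}$ is strictly increasing, violating ACC for $B$. If instead $a_{k_j}$ is non-increasing, then $b_{k_j}=c_{k_j}-a_{k_j}$ is strictly increasing in $j$ (because $c_{k_j}$ is strictly increasing and $-a_{k_j}$ is non-decreasing), again violating ACC for $B$.

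No serious obstacle arises; the whole lemma is elementary, and the monotone-subsequence extraction in (2) is the only place where any thought is needed beyond unwinding definitions.
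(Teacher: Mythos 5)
Your proof is correct; the paper itself offers no argument, merely asserting that all three statements ``directly follow from definitions,'' and your direct verification --- including the monotone-subsequence extraction that handles part (2) --- supplies exactly the routine details the authors omitted.
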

All the statements in Lemma \ref{lem-ishi} directly follow from 
definitions. 
\end{say}

\section{Proof of the main theorem}

In this section, we prove the main theorem of this paper:~Theorem \ref{main-thm}. 
We will freely use the notation in Section \ref{sec2}. 

\begin{proof}[Proof of {\em{Theorem \ref{main-thm}}}] 
Let $X$ be an $n$-dimensional $\mathbb Q$-factorial toric Fano variety with 
Picard number one as in \ref{tuitui}. 
It is sufficient to consider $\{v_1, \cdots, v_{n+1}\}$ with 
the condition 
$$
\frac{\mult(\mu_{1, 2})}{a_1\mult (\sigma_2)}\leq 
\frac{\mult(\mu_{k, l})}{a_k\mult (\sigma_l)}
$$
for every $(k, l)$. 
We note that 
$$
\frac{\mult(\mu_{k, l})}{a_k\mult (\sigma_l)}=
\frac{\mult(\mu_{k, l})}{a_l\mult (\sigma_k)}
$$
for every $k\ne l$. 
We also note that we can easily check that 
$$
l(X)=\underset{1\leq i\leq n+1}\min(-K_X\cdot V(v_i)) 
$$ 
(cf.~\cite[Proposition 14-1-2]{ma}). 
In our notation, we have 
$$
l(X)=\frac{\mult (\mu_{1, 2})}{a_1\mult (\sigma_2)}\sum _{i=1}^{n+1}
a_i 
$$ 
for this $\{v_1, \cdots, v_{n+1}\}$ by the formula in \ref{tuitui}. 
Therefore, we can write 
$$
\mathcal L_n^{\mathrm{toric}}=\left\{ \frac{\mult (\mu_{1, 2})}{a_1\mult 
(\sigma_2)}\sum _{i=1}^{n+1}a_i\left| 
\frac{\mult(\mu_{1, 2})}{a_1\mult (\sigma_2)}\leq 
\frac{\mult(\mu_{k, l})}{a_k\mult (\sigma_l)} 
\ {\text{for every $(k, l)$}}\right. 
\right\}. 
$$
It is sufficient to prove that 
$$
\mathcal M_i=\left\{\frac{\mult (\mu_{1, 2})}{a_1\mult (\sigma_2)}a_i\left|
\frac{\mult (\mu_{1, 2})}{a_1\mult (\sigma_2)}\leq 
\frac{\mult (\mu_{k, l})}{a_k\mult (\sigma_l)}
\ {\text{for every $(k, l)$}} \right.  \right\}
$$ satisfies the ascending chain condition. This is because 
$\mathcal L_n^{\mathrm{toric}}$ is contained 
in 
$$
\left\{\frac{\mult (\mu_{1, 2})}{\mult (\sigma_2)}\right\}+
\left\{\frac{\mult (\mu_{1, 2})}{\mult (\sigma_1)}\right\}
+\mathcal M_3+\cdots+\mathcal M_{n+1}. 
$$
We note that 
$$
\left\{\frac{\mult (\mu_{1, 2})}{\mult (\sigma_2)}\right\}, 
\left\{\frac{\mult (\mu_{1, 2})}{\mult (\sigma_1)}\right\} 
\subset \left\{\left. \frac{1}{m}\, \right| \, m\in \mathbb Z_{>0}\right\}. 
$$
Therefore, it is sufficient to prove the following proposition by Lemma \ref{lem-ishi}. 
\begin{prop}
For $3\leq i\leq n+1$, 
$\mathcal M_i\cap \{x\in \mathbb R\, | \, x>\varepsilon\}$ is 
a finite set for every $\varepsilon >0$. 
\end{prop}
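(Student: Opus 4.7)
The plan is to exploit the definition of $\mathcal{M}_i$: each $x$ has the form $x=a_i/(m_{1,2}\,\lcm(a_1,a_2))$ where $(1,2)$ maximizes $m_{k,l}\,\lcm(a_k,a_l)$ over all pairs. Applying this maximality to the pair $(j,i)$ with $j\neq i$ and using $\lcm(a_j,a_i)=a_i\cdot a_j/\gcd(a_j,a_i)$ gives $m_{j,i}\cdot(a_j/\gcd(a_j,a_i))\leq 1/x<1/\varepsilon$. In particular, writing $d_j:=a_j/\gcd(a_j,a_i)$, both $d_j$ and $m_{j,i}$ are positive integers bounded by $\lfloor 1/\varepsilon\rfloor$ for every $j\neq i$, and $x\leq 1$ (since $m_{1,2}\lcm(a_1,a_2)\geq m_{j,i}\lcm(a_j,a_i)\geq a_i$).

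Next I would put $x$ in a useful form. Write $a_j=d_jg_j$ with $g_j:=\gcd(a_j,a_i)\mid a_i$, so $\gcd(d_j,a_i/g_j)=1$. A prime-by-prime check then yields $\lcm(a_1,a_2)=D\cdot\lcm(g_1,g_2)$ for some positive integer $D$ dividing $\lcm(d_1,d_2)$; hence $D\leq\lfloor 1/\varepsilon\rfloor^2$ is bounded. Setting $f:=a_i/\lcm(g_1,g_2)\in\mathbb{Z}_{>0}$, we obtain the compact form $x=f/(D\,m_{1,2})$, so the denominator of $x$ in lowest terms divides $D\cdot m_{1,2}$.

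The main obstacle is to bound $m_{1,2}$ in terms of $\varepsilon$ and $n$. For this I would invoke Lemma \ref{kiso}: passing to the finite, codimension-one-{}\'etale cover $X'=\mathbb{P}(a_1,\ldots,a_{n+1})\to X$, the projection formula gives $m_{1,2}\,x=a_i/\lcm(a_1,a_2)=f/D$, so $m_{1,2}x$ is a rational with denominator dividing $D$. Combined with the lattice identities $m_{k,l}\cdot\mult(\mu_{k,l})=M'\gcd(a_k,a_l)$ (where $M'=[N:N']$) and the already-established bounds on the $m_{j,i}$'s and the $d_j$'s, one argues that $m_{1,2}$ itself is bounded by a constant $Q=Q(\varepsilon,n)$: any prime power appearing in $m_{1,2}$ beyond what is reflected in the bounded data $d_j,m_{j,i}$ forces, via these identities, a cancellation inside $f$. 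Once the denominator of $x$ in lowest terms is bounded by $D\cdot Q$, the set $\mathcal{M}_i\cap(\varepsilon,1]$ is contained in the finite set $\{p/q:1\leq p\leq q\leq DQ,\ p/q>\varepsilon\}$, which finishes the proof.
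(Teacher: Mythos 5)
Your first two paragraphs are correct and in fact reproduce the paper's own reduction in different notation: with $l=\lcm(a_1,a_2)$ one has $\lcm(g_1,g_2)=\gcd(l,a_i)$, so your $f$ and $D$ are exactly the paper's $a_i/\gcd(l,a_i)$ and $l/\gcd(l,a_i)$, your bound $D\le\lfloor\varepsilon^{-1}\rfloor^2$ is the paper's $l/\gcd(l,a_i)\le\varepsilon^{-2}$, and the bounds $d_j,m_{j,i}\le\lfloor\varepsilon^{-1}\rfloor$ together with $x\le 1$ are all there. So everything reduces, as you say, to bounding $m_{1,2}$ in terms of $\varepsilon$ (and $n$).

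That last step is where your argument has a genuine gap, and the route you sketch cannot close it. The identities $m_{k,l}\cdot\mult(\mu_{k,l})=M'\gcd(a_k,a_l)$ are correct (since $\mult(\sigma_l)=M'a_l$), but they only express $m_{1,2}=M'\gcd(a_1,a_2)/\mult(\mu_{1,2})$ as a ratio of two \emph{unbounded} quantities; combining them with the identities for the pairs $(j,i)$ yields relations such as $m_{1,2}\mult(\mu_{1,2})/\bigl(m_{j,i}\mult(\mu_{j,i})\bigr)=\gcd(a_1,a_2)/\gcd(a_j,a_i)$, which still involve the unbounded multiplicities $\mult(\mu_{1,2})$ and $\mult(\mu_{j,i})$ and give no control on $m_{1,2}$ alone. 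More fundamentally, the bounded data you have amounts to the statement that the cyclic groups $N/(N_{\mu_{i,j}}+\mathbb Z(j))$ have order $\mult(\sigma_j)/\mult(\mu_{i,j})=d_jm_{j,i}\le\lfloor\varepsilon^{-1}\rfloor$ for each $j\ne i$; knowing that certain quotients of the finite group $N/N'$ are small does not, by pure group theory or by your numerical identities, bound the quotient $N(\mu_{1,2})/N'(\mu_{1,2})$ whose order is $m_{1,2}$. The missing ingredient is a lattice-theoretic fact: one shows that $(\lfloor\varepsilon^{-1}\rfloor)!\,v\in N_{\mu_{i,j}}+\mathbb Z(j)\subset N'+N_{\mu_{i,j}}$ for every $v\in N$ and every $j\ne i$, and then proves (after normalizing $i=n+1$ and embedding everything in $\mathbb Q^n$) that
$$
\bigcap_{j\ne i}\bigl(N'+N_{\mu_{i,j}}\bigr)=N',
$$
so that $(\lfloor\varepsilon^{-1}\rfloor)!$ annihilates $N/N'$ and hence $1\le m_{1,2}\le(\lfloor\varepsilon^{-1}\rfloor)!$, since $N(\mu_{1,2})/N'(\mu_{1,2})$ is a cyclic quotient of $N/N'$. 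Your phrase ``forces a cancellation inside $f$'' does not substitute for this: even granting cancellation between $f$ and $m_{1,2}$, nothing in your data prevents the reduced denominator $Dm_{1,2}/\gcd(f,Dm_{1,2})$ from being arbitrarily large without the intersection identity above. Once $m_{1,2}$ is bounded this way, your concluding finiteness argument is fine.
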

From now on, we fix $i$ with $3\leq i\leq n+1$. 
Since 
$$
\mathcal M_i=\left\{\frac{\mult (\mu_{1, 2})}{a_1\mult (\sigma_2)}a_i\left|
\frac{\mult (\mu_{1, 2})}{a_1\mult (\sigma_2)}\leq 
\frac{\mult (\mu_{k, l})}{a_k\mult (\sigma_l)}
\ {\text{for every $(k, l)$}} \right.  \right\}, 
$$ 
we have 
\begin{align*}
\varepsilon &<\frac{\mult (\mu_{1, 2})}{a_1\mult (\sigma_2)} a_i
\\
&= \frac{\mult (\mu_{1, 2})}{a_1\mult (\sigma_2)}\cdot 
\frac{a_i\mult (\sigma_j)}{\mult (\mu_{i, j})}\cdot \frac{\mult (\mu_{i, j})}
{\mult (\sigma_j)}\\
&\leq 
\frac{\mult (\mu_{i, j})}{\mult (\sigma_j)} 
\end{align*}
for every $1\leq j\leq n+1$ with $j\ne i$. 
Therefore, we obtain 
$$
\frac{\mult (\sigma_j)}{\mult (\mu_{i, j})}\leq \llcorner \varepsilon ^{-1}\lrcorner
$$ for every $1\leq j\leq n+1$ with $j\ne i$, where $\llcorner \varepsilon^{-1}\lrcorner$ is 
the integer satisfying $\varepsilon^{-1}-1<\llcorner \varepsilon^{-1}\lrcorner\leq \varepsilon^{-1}$. 
We set 
$$
\mathbb Z(i, j)=\mathbb Z v_1+\cdots +
\mathbb Zv_{i-1}+\mathbb Zv_{i+1}+\cdots 
+\mathbb Zv_{j-1}+\mathbb Zv_{j+1}+\cdots +
\mathbb Zv_{n+1}
$$ 
for $j\ne i$ and 
$$\mathbb Z(j)=\mathbb Zv_1+\cdots +\mathbb Zv_{j-1}+
\mathbb Zv_{j+1}+\cdots +\mathbb Zv_{n+1}. 
$$ 
We consider the following diagram. 
$$
\xymatrix{
&0 \ar[d]& 0\ar[d]& 0\ar[d]& \\ 
0\ar[r]& \mathbb Z(i, j)\ar[d]\ar[r]& \mathbb Z(j)\ar[d]\ar[r]
& \mathbb Z\ar[d]\ar[r]& 0\\
0\ar[r] & N_{\mu_{i, j}}\ar[d]\ar[r]&
N\ar[d]^{\pi_j}\ar[r]&N/N_{\mu_{i, j}}\ar[d]
\ar[r] &0\\
0\ar[r] & N_{\mu_{i, j}}/\mathbb Z(i, j)\ar[d]
\ar[r]&N/\mathbb Z(j)\ar[r]^{p_j}\ar[d]& \mathcal A^j _{(i, j)}\ar[r] \ar[d]& 0\\ 
& 0& 0& 0&  
}
$$
We note that 
$$
\left|\mathcal A^j_{(i, j)}\right|=\frac{\mult (\sigma_j)}{\mult (\mu_{i, j})}\leq \llcorner 
\varepsilon ^{-1}\lrcorner. 
$$
Therefore, for any $v\in N$, we have 
$$
p_j\circ \pi_j\left((\llcorner \varepsilon ^{-1}\lrcorner)!v\right)=0
$$
in $\mathcal A^j_{(i, j)}$. 
Thus, 
$$
\pi_j\left((\llcorner \varepsilon^{-1}\lrcorner)!v\right)\in N_{\mu_{i, j}}/\mathbb Z(i, j). 
$$
This holds for every $1\leq j\leq n+1$ with $j\ne i$. 
Let us consider the natural projection $\pi:N\to N/N'$ where 
$N'=\sum _{k=1}^{n+1}\mathbb Zv_k$. Then, by the above argument, 
we obtain that 
$$\pi\left((\llcorner \varepsilon ^{-1}\lrcorner)!v\right)\in 
\bigcap _{j\ne i}N_{\mu_{i, j}}/(N'\cap N_{\mu_{i, j}})\subset N/N'. $$ 
\begin{claim}$\pi\left((\llcorner \varepsilon ^{-1}\lrcorner)!v\right)=0$ in $N/N'$, equivalently, 
$(\llcorner \varepsilon ^{-1}\lrcorner)!v\in N'$. 
\end{claim}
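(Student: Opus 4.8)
The plan is to prove the slightly stronger, and cleaner, statement
$$
\bigcap_{j\neq i}\left(N_{\mu_{i,j}}+N'\right)=N'.
$$
Since we have already shown that $(\llcorner \varepsilon^{-1}\lrcorner)!\,v$ lies in each $N_{\mu_{i,j}}+\mathbb{Z}(j)\subseteq N_{\mu_{i,j}}+N'$, hence in the left-hand side, this equality gives the Claim at once (indeed for every $v\in N$). As $N'\subseteq N_{\mu_{i,j}}+N'$ for each $j$, only the inclusion ``$\subseteq$'' requires proof: one must show that a vector $y\in N$ lying in every $N_{\mu_{i,j}}+N'$ already lies in $N'$.

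First I would set up coordinates adapted to $\sigma_i$. Since $\{v_k : k\neq i\}$ is a $\mathbb{Q}$-basis of $N_{\mathbb{R}}$, let $v_j^{*}$ be the dual basis; then the hyperplane $W_j$ spanned by $\{v_k : k\neq i,\,j\}$ equals $\ker v_j^{*}$ and $N_{\mu_{i,j}}=N\cap W_j$. The first step is the elementary remark that, for $y\in N$, one has $y\in N_{\mu_{i,j}}+N'$ if and only if $v_j^{*}(y)\in v_j^{*}(N')$ (the nontrivial direction subtracts a suitable element of $N'$ from $y$ to land in $N\cap W_j$). Using the relation $\sum_k a_kv_k=0$, which forces $v_j^{*}(v_i)=-a_j/a_i$, a direct computation gives $v_j^{*}(N')=\frac{\gcd(a_i,a_j)}{a_i}\mathbb{Z}$. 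So the hypothesis on $y$ becomes the system $v_j^{*}(y)\in\frac{\gcd(a_i,a_j)}{a_i}\mathbb{Z}$ for all $j\neq i$, whereas the conclusion $y\in N'$ is equivalent to the existence of a \emph{single} $c\in\mathbb{Z}$ with $v_k^{*}(y)+c\,a_k/a_i\in\mathbb{Z}$ for all $k\neq i$. Working one prime $p$ at a time, everything thus reduces to a Chinese–remainder–type compatibility of the congruences $c\equiv\gamma_j\pmod{p^{t_j}}$, where $t_j=(v_p(a_i)-v_p(a_j))^{+}$ and $\gamma_j$ is read off from $y$; for $p\nmid a_i$ all $t_j$ vanish and there is nothing to prove, so the work is concentrated at primes dividing $a_i$.

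The heart of the matter — and the step I expect to be the main obstacle — is to establish this compatibility, and here the primitivity of the $v_k$ together with the relation enters essentially. The coordinate vector of $v_i$, namely $\rho=(-a_k/a_i)_k$, lies in $N$, so the $p$-power multiples of its class are forced into $N/\Lambda_i$, where $\Lambda_i=\sum_{k\neq i}\mathbb{Z}v_k$; on the other hand, primitivity of each $v_k$ ($k\neq i$) says precisely that no $\tfrac1p v_k$ lies in $N$, i.e. $N/\Lambda_i$ contains no single-coordinate $p$-torsion vector. If the congruences were incompatible, one could combine an incompatible pair with a suitable multiple of $\rho$ and clear denominators so as to manufacture exactly such a single-coordinate vector $\tfrac1p v_l\in N$, a contradiction. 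Carrying out this $p$-adic bookkeeping cleanly — in particular normalizing at an index where $v_p(a_k)$ is minimal (so $t_k$ is maximal) and verifying that the vector produced is genuinely supported in a single coordinate — is the delicate point; once it is in place the congruences are compatible, the single $c$ exists, and $y\in N'$ follows, completing the proof of the Claim.
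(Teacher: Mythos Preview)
Your plan and the paper's proof coincide exactly at the strategic level: both reduce the Claim to the identity
\[
\bigcap_{j\neq i}\bigl(N'+N_{\mu_{i,j}}\bigr)=N',
\]
and both introduce the same coordinates (relabel so that $i=n+1$ and set $v_1,\dots,v_n$ equal to the standard basis of $\mathbb{Q}^n$). From that point, however, the paper proceeds in one line --- it simply records that in these coordinates ``it is easy to see'' that the displayed intersection equals $N'$, and then invokes the second isomorphism theorem $N_{\mu_{n+1,j}}/(N'\cap N_{\mu_{n+1,j}})\simeq (N'+N_{\mu_{n+1,j}})/N'$ to finish. There is no CRT argument, no prime-by-prime analysis, and no explicit appeal to primitivity.

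Your route is therefore not a different approach but a (much) more detailed attempt to justify the paper's ``easy to see''. Your translation of $y\in N'+N_{\mu_{i,j}}$ into the condition $v_j^{*}(y)\in\frac{\gcd(a_i,a_j)}{a_i}\mathbb{Z}$ is correct, and the reformulation of $y\in N'$ as the existence of a single $c$ solving all the congruences is also correct. The difficulty is that you stop precisely at the point that carries the content: you state that incompatibility of the congruences would allow one to ``manufacture'' an element $\tfrac{1}{p}v_l\in N$, contradicting primitivity, but you do not carry this out --- you yourself call it ``the delicate point''. As written, then, your argument is incomplete exactly where the paper is terse; you have replaced a one-line assertion by a longer argument whose crux is still only asserted. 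If you want to finish along your lines, the cleanest way is to work inside the finite group $A=N/\mathbb{Z}^n\subset(\mathbb{Q}/\mathbb{Z})^n$: primitivity of the $e_k$ says $A$ contains no nonzero element supported in a single coordinate, and from an incompatible pair $(j,k)$ one subtracts the appropriate multiples of $\bar v_{n+1}$ from $y$ and from each other to isolate such an element; making this precise is exactly the bookkeeping you flagged.
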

\begin{proof}[Proof of Claim] 
By replacing $v_i$ with $v_{n+1}$, we may assume that $i=n+1$. 
We embed $N$ and $N'$ into $\mathbb Q^n$ by setting 
$v_1=(1, 0, \cdots, 0)$, $v_2=(0, 1, 0, \cdots, 0)$, $\cdots$, and $v_n=(0, \cdots, 0, 1)$. 
Then it is easy to see that 
$$
\bigcap_{1\leq j\leq n}(N'+N_{\mu_{n+1, j}})=N'. 
$$
On the other hand, we have 
$$
N_{\mu_{n+1, j}}/(N'\cap N_{\mu_{n+1, j}})\simeq (N'+N_{\mu_{n+1, j}})/N'
$$ 
for $1\leq j\leq n$. 
Therefore, 
$$\pi\left((\llcorner \varepsilon ^{-1}\lrcorner)!v\right)\in 
\bigcap _{1\leq j\leq n}N_{\mu_{n+1,j}}/(N'\cap N_{\mu_{n+1, j}})\subset N/N'$$ 
implies that $$\pi\left((\llcorner \varepsilon^{-1}\lrcorner)!v\right)=0$$ in $N/N'$, equivalently, 
$$
(\llcorner \varepsilon^{-1}\lrcorner)!v\in N'.
$$
This completes the proof of Claim. 
\end{proof}
Thus, we obtain $$1\leq m_{1, 2}\leq (\llcorner \varepsilon ^{-1}\lrcorner)!.$$ 
Moreover, 
$$
\varepsilon <\frac{\mult (\mu_{1, i})}{\mult (\sigma_1)}=\frac{\gcd(a_1, a_i)}{m_{1, i}a_1}\leq 
\frac{\gcd(a_1, a_i)}{a_1}. 
$$ 
By the same way, we obtain 
$$
\varepsilon <\frac{\gcd(a_2, a_i)}{a_2}. 
$$ 
We note the following obvious inequality
$$
\frac{\mult (\mu_{1, 2})}{a_1\mult (\sigma_2)}
a_i\leq \frac{\mult (\mu_{1, 2})}{a_1\mult (\sigma_2)}
\cdot \frac{a_i\mult (\sigma_2)}{\mult (\mu_{2, i})}\leq 1. 
$$
Since $a_1$, $a_2$, and $a_i$ are positive integers, we have 
$$
\gcd(l, a_i)=\frac{\gcd(a_1, a_i)\cdot \gcd(a_2, a_i)}{\gcd(d, a_i)}
$$
where $d:=\gcd(a_1, a_2)$ and $l:=\lcm (a_1, a_2)=\frac{a_1a_2}{d}$.  
Therefore, we obtain 
$$
\frac{\gcd(l, a_i)}{l}=\frac{\gcd(a_1, a_i)}{a_1}\cdot \frac{\gcd(a_2, a_i)}{a_2}\cdot 
\frac{d}{\gcd(d, a_i)}>\varepsilon ^2\frac{d}{\gcd(d, a_i)}\geq \varepsilon ^2. 
$$
This means that 
$$
\frac{l}{\gcd(l, a_i)}\leq \varepsilon^{-2}. 
$$
Thus, we have 
\begin{align*}
1&\geq \frac{\mult (\mu_{1, 2})}{a_1\mult (\sigma_2)}a_i=
\frac{a_i}{m_{1, 2}l}=\frac{\gcd(l ,a_i)}{l}\cdot \frac{a_i}{m_{1, 2}\gcd(l, a_i)}\\
&\geq \varepsilon^2\frac{a_i}{m_{1, 2}\gcd(l, a_i)}. 
\end{align*}
So, we obtain 
$$
\frac{a_i}{\gcd(l, a_i)}\leq \varepsilon ^{-2}m_{1, 2}\leq \varepsilon^{-2}(\llcorner \varepsilon^{-1}\lrcorner)!. 
$$ 
On the other hand, 
$$
\frac{\mult(\mu_{1, 2})}{a_1\mult (\sigma_2)}a_i=\frac{a_i}{m_{1, 2}l}.  
$$ 
We note that 
$$
\frac{a_i}{m_{1, 2}l}=\frac{\displaystyle{\frac{a_i}{\gcd(l, a_i)}}}
{\displaystyle{m_{1, 2}\frac{l}{{\gcd(l, a_i)}}}}. 
$$
This implies that $\mathcal M_i\cap \{x\in \mathbb R\, |\, x>\varepsilon\}$ 
is a finite set. This is because 
$$
\frac{a_i}{\gcd(l, a_i)}, \ \frac{l}{\gcd(l, a_i)}, \ m_{1, 2}
$$ are 
positive integers and 
$$
\frac{a_i}{\gcd(l, a_i)}\leq \varepsilon ^{-2}(\llcorner \varepsilon ^{-1}\lrcorner)!,\ 
\frac{l}{\gcd(l, a_i)}\leq\varepsilon ^{-2}, \ m_{1, 2}\leq (\llcorner 
\varepsilon ^{-1}\lrcorner)!. 
$$ 
Thus we proved the proposition and 
$\mathcal L_n^{\mathrm{toric}}$ satisfies the ascending chain condition. 
\end{proof}

\end{document}